\def\RR{{\mathbb R}}
\def\liml{\lim\limits}
\def\suml{\sum\limits}
\newtheorem{theorem}{\bf Theorem}[section]
\newtheorem*{theorem*}{Theorem}
\def\suml{\sum\limits}
\def\({\left(}
\def\){\right)}
\def\suml{\sum\limits}
\def\liml{\lim\limits}
\begin{document}
\title{A note on a Sung-Wang's paper}
\author{Nguyen Thac Dung}

\date{\today}
\maketitle

\begin{abstract}
The purpose of this note is to study the connectedness at infinity of manifold by using the theory of $p$-harmonic functions. We show that if the first eigenvalue $\lambda_{1,p}$ for the $p$-Laplacian achievies its maximal value on a K\"{a}hler manifold or a quaternionic K\"{a}hler manifold then such a manifold must be connected at infinity unless it is a topological cylinder with an explicit warped product metric.  
\noindent 
\vskip0.2cm

\noindent {\it 2000 Mathematics Subject Classification}: 53C24, 53C21

\noindent {\it Key words and phrases} : $p$-harmonic function, $p$-Laplacian, the first eigenvalue, connectedness at infinity, warped metric product.

\end{abstract}
\large
\vskip0.4cm

%%%%%%%%%%%%%%%%%%%%%%%%%%%%%%%%%%%%%%%%%%%%%%%%%%%%%%%%%%%%%%%%%%%%%%%%%%%%%%%%%%%%%%%%%%%%%%%%%%%%%%%%%%%%%%%%%%%%%%%%%%%%%%%%%%%%%%%
\section{Introduction}
It is well-known that the theory of $L^2$ harmonic functions/forms has close relation to geometry of manifolds, in particular, geometric structure at infinity. We refer the reader to \cite{KLZ, li, LW1, LW2, LW3} for further details of this topic. From a variational point of view, $p$-harmonic functions are natural extensions of harmonic functions. Therefore, it is very natural to study $p$-harmonic functions/forms on submanifolds and ask what is the ralationship between geometry of such these submanifolds and the space of $p$-harmonic functions/forms. We emphasize that compared with the theory for harmonic functions, the study of $p$-harmonic functions is generally harder, even though elliptic, is degenerate and the regularity results are far weaker. In \cite{BK}, Buckuley and Koskela gave volume estimate of $p$-parabolic ends, $p$-nonparabolic ends in term of the first eigenvalue of the $p$-Laplacian. Then, in \cite{BCS}, Batista, Cavalcante and Santos used $p$-harmonic function to introduce a definition of $p$-parabolic ends (also see \cite{PST}). They proved that  if $E$ is an end of a complete Riemannian manifold and satisfies a Sobolev-type inequality then $E$ must either have finite volume or to be $p$-nonparabolic. A characterization of $p$-nonparabolic ends in the context of submanifold is also verified. Recently, Chang, Chen and Wei (see \cite{CCW}) studied $p$-harmonic maps with finite $q$-energy and prove a Liouville type theorem. As an application, they extended this theorem to some $p$-harmonic maps such as $p$-harmonic morphisms and conformal maps between Riemannian manifolds. 

The main purpose of this note is to understand manifolds whose principal eigenvalue $\lambda_{1,p}$ for the $p$-Laplacian achieves its maximal. When $p=2$, this problem has been studied by Li and Wang in \cite{LW1, LW2, LW3}. In \cite{KLZ}, Kong, Li and Zhou proved a splitting theorem on quaternionic K\"{a}hler manifolds. In the general case of $p\geq2$, Sung and Wang generalized Li-Wang's results on a Riemannian manifold with $\lambda_{1,p}$ obtaining its maximal value. They showed that such a manifold must be connected at infinity unless it is a topological cylinder endowed with an explicit warped product metric. Motivated by these results, we consider the same problem on K\"{a}hler manifolds and obtain following two theorems.
\begin{theorem}\label{main3}
Let $M^{2m}$ be a complete K\"{a}hler manifold of complex dimension $m\geq1$ with holomorphic bisectional curvature bounded by 
$$ BK_M\geq-1. $$
If $\lambda_{1,p}\geq\left(\frac{2m}{p}\right)^p$, then either
\begin{enumerate}
\item $M$ has no $p$-parabolic end; or 
\item $M$ splits as a warped product $M=\RR\times N$ where $N$ is a compact manifold. Moreover, the metric is given by
$$ ds_M^2=dt^2+e^{-4t}\omega_2^2+e^{-2t}\suml_{\alpha=3}^{2m}\omega_{\alpha}^2, $$
where $\left\{\omega_2,\ldots, \omega_{2m}\right\}$ are orthonormal coframes for $N$.
\end{enumerate}
\end{theorem}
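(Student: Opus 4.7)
The plan is to adapt Sung--Wang's strategy for the Riemannian $p$-Laplacian to the K\"ahler setting, exploiting the refined Bochner and Kato estimates available in the presence of a compatible complex structure. The argument proceeds by contradiction: assume $M$ admits at least one $p$-parabolic end $E$, and aim to force the stated warped-product conclusion. First, I would construct a nonconstant bounded positive $p$-harmonic function $u$ on $M$ that encodes the geometry of $E$. This follows the standard exhaustion procedure: solve the Dirichlet problem for $\Delta_p u = 0$ on $E\cap B_R$ with boundary data $1$ on $\partial E$ and $0$ on $\partial B_R$, and extract a nontrivial weak limit as $R\to\infty$ using the $p$-parabolicity of $E$ together with the comparison principle.

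The core step is to apply the Bochner formula for the $p$-Laplacian (in the form of Kotschwar--Ni and Sung--Wang) to a suitable power $v = u^{\alpha}$ on the regular set $\{|\nabla u|>0\}$. Two ingredients combine here: the curvature hypothesis $BK_M\geq -1$, which controls the Ricci term, and the refined K\"ahler Kato inequality, which improves the Riemannian refined Kato estimate by exploiting the $J$-invariance of the Hessian of $u$ along the decomposition $\langle \nabla u\rangle \oplus \langle J\nabla u\rangle \oplus \{\nabla u, J\nabla u\}^{\perp}$. Together these yield a sharp pointwise inequality which, after integration against a cutoff of $v$ and insertion into the variational characterization
\begin{equation*}
\lambda_{1,p} = \inf_{\phi \in C_c^{\infty}(M)\setminus\{0\}} \frac{\int_M |\nabla \phi|^p\,dV}{\int_M |\phi|^p\,dV},
\end{equation*}
produces the upper bound $\lambda_{1,p}\leq (2m/p)^p$. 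The sharp constant $(2m/p)^p$ emerges from the K\"ahler-improved Kato constant, replacing the Riemannian factor $(n-1)=(2m-1)$ by $2m$. The hypothesis $\lambda_{1,p}\geq (2m/p)^p$ then forces every inequality along the chain --- Bochner, curvature, and K\"ahler Kato --- to be an equality.

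From these equality cases one extracts the rigidity. Equality in the K\"ahler Kato inequality constrains $\nabla^2 v$ to have exactly two distinct eigenvalues, one on the complex line spanned by $\nabla v$ and $J\nabla v$, and one on the $(2m-2)$-real-dimensional orthogonal complement; equality in the curvature estimate forces $BK_M\equiv -1$ in the relevant directions. The unit vector $\nabla v/|\nabla v|$ then defines a parallel foliation of $M$ whose leaves are compact, and a de Rham-type argument produces the splitting $M=\RR\times N$ with $N$ compact. Integrating the structure equations for the second fundamental form of the level sets along the $\RR$-factor yields the warping factors $e^{-2t}$ on the $J$-invariant complex line (producing the $e^{-4t}\omega_2^2$ term) and $e^{-t}$ on the transverse $(m-1)$ complex directions (producing the $e^{-2t}\omega_{\alpha}^2$ terms).

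The main obstacle is the low regularity of $p$-harmonic functions: $u$ is only $C^{1,\alpha}$ and the second-order Bochner calculus is strictly valid only on $\{|\nabla u|>0\}$. The remedy, standard in the Sung--Wang framework, is to work on $\{|\nabla u|>\epsilon\}$ with an appropriate cutoff and pass to the limit $\epsilon\to 0^{+}$, controlling the boundary terms through the integrability of $|\nabla u|^{p-2}\nabla^2 u$; additional care is needed when upgrading the pointwise equality cases on the regular set to a global rigidity statement, which is where the compactness of $N$ ultimately enters.
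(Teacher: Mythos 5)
Your proposal has a genuine gap at its very first step, and it is a fatal one for the case the theorem actually addresses. The dichotomy to be proved is: either $M$ has no $p$-parabolic end, or $M$ splits. So the working hypothesis is that $M$ \emph{does} possess a $p$-parabolic end $E$, and you propose to build a nonconstant bounded positive $p$-harmonic function by solving $\Delta_p u_R=0$ on $E\cap B_R$ with data $1$ on $\partial E$ and $0$ on $\partial B_R$ and passing to the limit. But $p$-parabolicity of $E$ means precisely that this exhaustion sequence converges to the constant function $1$ (equivalently, the $p$-capacity of $\partial E$ in $E$ vanishes); the limit is nontrivial exactly when $E$ is $p$-\emph{non}parabolic. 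So the function $u$ on which your entire Bochner--Kato--rigidity chain is built does not exist in the relevant case, and the rest of the argument has nothing to act on. This is why the paper (following Theorem 3.1 of Sung--Wang) does something quite different: it takes the Busemann function $\beta$ of a ray in the parabolic end $E$, uses the K\"ahler Laplacian comparison $\Delta\beta\geq -2m$ of Li--Wang to show that $g=e^{(2m/p)\beta}$ satisfies $\Delta_p g\geq -\lambda_{1,p}\,g^{p-1}$, and then plugs $\phi g$ into the variational characterization of $\lambda_{1,p}$. The error term $\int_M|\nabla\phi|^2 g^p$ is killed by two volume estimates that are special to this setting: the Buckley--Koskela exponential volume decay $V(E\setminus B(R))\leq ce^{-2mR}$ of the $p$-parabolic end when $\lambda_{1,p}=(2m/p)^p$, and the bound $\beta\leq -r+c$ on $M\setminus E$ combined with $V(B(R))\leq ce^{2mR}$. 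This forces $\Delta_p g+\lambda_{1,p}g^{p-1}\equiv 0$, hence $\Delta\beta\equiv -2m$, and the splitting with warping factors $e^{-4t}$, $e^{-2t}$ comes from the equality case of the complex Hessian comparison for $\beta$, not from an eigenvalue analysis of $\nabla^2 u^{\alpha}$.

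A secondary point: even in settings where a nonconstant $p$-harmonic function is available (two $p$-nonparabolic ends), your claim that the K\"ahler-refined Kato inequality alone upgrades the Riemannian constant to produce $(2m/p)^p$ and then yields the stated three-eigenvalue Hessian structure (note the metric exhibits eigenvalues $0,-2,-1$ for $\nabla^2 t$, not two) would need to be proved, and the holomorphic bisectional curvature hypothesis does not enter the $p$-Bochner formula through the Ricci term in the straightforward way you suggest. If you want to salvage your outline, it would prove a different (nonparabolic-ends) statement; for the theorem as stated you need the Busemann-function subsolution argument.
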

Similarly, we obtain a splitting theorem on quaternionic K\"{a}hler manifolds, under a weaker assumption on the scalar curvature
\begin{theorem}\label{main4}
Let $M^{4m}$ be a complete noncompact quaternionic K\"{a}hler manifold of real dimension $4m$ with the scalar curvature of $M$ bounded by
$$ S_M\geq-16m(m+2). $$
If $\lambda_{1,p}\geq\left(\frac{2(2m+1)}{p}\right)^p$, then either
\begin{enumerate}
\item $M$ has no $p$-parabolic end; or 
\item $M$ splits as a warped product $M=\RR\times N$ where $N$ is a compact manifold. Moreover, the metric is given by
$$ ds_M^2=dt^2+e^{4t}\suml_{p=2}^4\omega_p^2+e^{2t}\suml_{\alpha=5}^{4m}\omega_{\alpha}^2, $$
where $\left\{\omega_2,\ldots, \omega_{4m}\right\}$ are orthonormal coframes for $N$.
\end{enumerate}
\end{theorem}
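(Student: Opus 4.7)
The plan is to follow the strategy of Sung--Wang and of the proof of Theorem~\ref{main3}, substituting a Bochner/Kato argument adapted to the quaternionic Kähler structure. Assume $M$ admits a $p$-parabolic end; by barrier arguments standard in the theory (Holopainen, Sung--Wang) one produces a bounded nonconstant positive $p$-harmonic function $u$ on $M$ whose asymptotic decay rate along a chosen end is $\tfrac{2(2m+1)}{p}$. Set $f=-\tfrac{p}{2(2m+1)}\log u$; this is the candidate coordinate $t$ in the warped product, normalized so that $|\nabla f|=1$ at rigidity.

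The analytic engine is the Bochner formula
\[
\tfrac{1}{2}\Delta|\nabla u|^{2}=|\nabla^{2}u|^{2}+\langle\nabla\Delta u,\nabla u\rangle+\Ric(\nabla u,\nabla u)
\]
together with a refined Kato/Cauchy--Schwarz estimate on $|\nabla^{2}u|^{2}$. Because every quaternionic Kähler manifold is Einstein, the hypothesis $S_M\geq -16m(m+2)$ converts directly into $\Ric\geq -4(m+2)g$, so $\Ric(\nabla u,\nabla u)\geq -4(m+2)|\nabla u|^{2}$. Using the three locally parallel almost complex structures $I,J,K$ one decomposes
\[
T_xM=\mathrm{span}(\nabla u)\oplus\mathrm{span}(I\nabla u,J\nabla u,K\nabla u)\oplus H,\qquad \dim H=4m-4,
\]
and bounds $|\nabla^{2}u|^{2}$ block by block. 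Combined with the $p$-harmonic rewriting of $\langle\nabla\Delta u,\nabla u\rangle$, this yields a sharpened differential inequality for $|\nabla u|^{p/2}$. Testing it against cutoffs of $|\nabla u|^{p/2}$ and invoking the variational characterization together with the hypothesis $\lambda_{1,p}\geq\bigl(\tfrac{2(2m+1)}{p}\bigr)^{p}$ forces every step in the chain to be saturated on $\{\nabla u\neq 0\}$.

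Saturation rigidly determines $f$: $|\nabla f|\equiv 1$ and $\nabla^{2}f$ has eigenvalue $0$ along $\nabla f$, eigenvalue $2$ along each of $I\nabla f,J\nabla f,K\nabla f$, and eigenvalue $1$ on the horizontal $H$, exactly as for the Busemann function on quaternionic hyperbolic space. The level sets $N_t=\{f=t\}$ are then smooth, their second fundamental form evolves in the prescribed block-diagonal form along the flow of $\nabla f$, and the standard warped product recognition yields
\[
ds_M^{2}=dt^{2}+e^{4t}\sum_{p=2}^{4}\omega_p^{2}+e^{2t}\sum_{\alpha=5}^{4m}\omega_\alpha^{2},
\]
with compactness of $N$ and $t\in\RR$ following from $p$-parabolicity of one end and the existence of a complementary end as in Sung--Wang. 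The main obstacle is to derive the refined Bochner/Kato inequality with the sharp constant matching the threshold $\bigl(\tfrac{2(2m+1)}{p}\bigr)^{p}$: the $1+3+(4m-4)$ decomposition forced by the quaternionic structure must interact precisely with the $p$-harmonic trace constraint on $\nabla^{2}u$ and with $\Ric\geq -4(m+2)g$ to reach exactly the threshold, and tracking all these constants simultaneously is the technical heart of the argument; a secondary obstacle is verifying integrability of the two non-vertical distributions so that the block eigenvalue structure of $\nabla^{2}f$ really yields a metric product on the level sets.
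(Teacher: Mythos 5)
Your route is genuinely different from the paper's, and as written it has a gap at exactly the point you flag as the ``technical heart.'' The paper never constructs a $p$-harmonic function and never runs a Bochner/Kato argument. Instead it takes the Busemann function $\beta$ of a ray in the assumed $p$-parabolic end, quotes the Laplacian comparison theorem of Kong--Li--Zhou to get $\Delta\beta\geq -a$ with $a=2(2m+1)$, and observes that $g=e^{a\beta/p}$ then satisfies $\Delta_p g\geq -(a/p)^p g^{p-1}$. Plugging $\phi g$ into the variational characterization of $\lambda_{1,p}$ and letting the cutoff $\phi$ live on annuli, the error term $\int|\nabla\phi|^2 g^p$ is killed by the volume decay $V(E\setminus B(R))\leq ce^{-aR}$ of the $p$-parabolic end (Buckley--Koskela) on one side and by $\beta\leq -r+c$ plus volume comparison on the other. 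Equality then forces $\Delta\beta\equiv -a$, and the splitting is read off from the rigidity case of the comparison theorem in \cite{KLZ}. All the quaternionic-K\"ahler-specific work is thus outsourced to the already-proved comparison theorem, and the degenerate regularity of $p$-harmonic functions is never an issue.

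The concrete gap in your version is the curvature input. You convert $S_M\geq -16m(m+2)$ into $\Ric\geq -4(m+2)g$ via the Einstein condition and propose to feed only $\Ric(\nabla u,\nabla u)\geq -4(m+2)|\nabla u|^2$ into the Bochner formula. That cannot reach the sharp threshold: a bare Ricci lower bound $\Ric\geq -4(m+2)g$ on a $4m$-manifold yields, through the standard comparison, a constant of order $\sqrt{(4m-1)\cdot 4(m+2)}$, which exceeds $4m+2=2(2m+1)$ for every $m>1$. The whole point of the quaternionic K\"ahler refinement in \cite{KLZ} is that the curvature identities in the directions $I\nabla u$, $J\nabla u$, $K\nabla u$ (not merely the Ricci trace) interact with the $1+3+(4m-4)$ Hessian decomposition to improve the constant down to $2(2m+1)$. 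Your sketch names this decomposition but does not derive the resulting sharp differential inequality, and explicitly defers it as the main obstacle; without it the argument does not close. Secondary unproved steps are the existence of a bounded positive $p$-harmonic function with decay rate exactly $\tfrac{2(2m+1)}{p}$ (this requires a second, $p$-nonparabolic end, which does follow from $\lambda_{1,p}>0$, but the barrier construction and the decay rate need justification), and the regularity of $|\nabla u|$ across the set $\{\nabla u=0\}$ where the $p$-Laplacian degenerates. If you want to keep the Bochner route, you must prove the refined Kato-type inequality with the quaternionic curvature term; otherwise the Busemann-function argument of the paper is the shorter and safer path.
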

The note has two sections. In the section 2, we give a unified proof of Theorems \ref{main3} and \ref{main4}. 
\section{Structure theorems on K\"{a}hler manifolds with maximal $\lambda_{1,p}$}
\setcounter{equation}{0}
In this section, we provide a unified proof of Theorems \ref{main3} and \ref{main4}. Our argument is close to the proof of Theorem 3.1 in \cite{SW}. First, recall that a smooth function $u$ is said to be $p$-harmonic if
$$ \Delta_pu:=div(|\nabla u|^{p-2}\nabla u)=0. $$
\begin{proof}[Proof of theorems \ref{main3} and \ref{main4}]
Note that by theorem 5.1 and 5.2 in \cite{BCS}, we have that $\lambda_{1,p}$ achieves its maximal value. Suppose that $M$ has a $p$-parabolic end $E$. Let $\beta$ be the Busemann function associated with a geodesic ray $\gamma$ contained in $E$, namely,
$$ \beta(q)= \liml_{t\to\infty}\left(t-dist(q, \gamma(t))\right).$$
The Laplacian comparison theorems in \cite{LW3} and \cite{KLZ} imply
$$ \Delta\beta\geq -a, $$
where $a=2m$ in theorem \ref{main3} (see \cite{LW3}) and $a=2(2m+1)$ in theorem \ref{main4} (see \cite{KLZ}). Hence, for $b=\frac{a}{p}$, we have
$$ \begin{aligned}
\Delta_p(e^{b\beta})&=div\left(b^{p-2}e^{b(p-2)\beta}\nabla(e^{b\beta})\right)\\
&=b^{p-1}div\left(e^{b(p-1)\beta}\nabla\beta\right)\\
&=b^{p-1}\left(e^{b(p-1)\beta}\Delta\beta+b(p-1)e^{b(p-1)\beta}|\nabla\beta|^2\right)\\
&\geq b^{p-1}e^{b(p-1)\beta}\left(-bp+b(p-1)\right)=-b^p\left(e^{b\beta}\right)^{p-1}.
\end{aligned} $$
Let $g=e^{b\beta}$, we obtain
$$ \Delta_p(g)\geq-\lambda_{1,p}g^{p-1}. $$
The variational characterization of $\lambda_{1,p}$ gives
$$ \lambda_{1,p}\int_M(\phi g)^p \leq \int_M|\nabla(\phi g)|^p,$$
for any nonnegative compactly supported smooth function $\phi$ on $M$. Integration by parts implies
$$ \int_M\phi^pg\Delta_pg=-\int_M\phi^p|\nabla g|^p-p\int_M\phi^{p-1}g\left\langle\nabla\phi, \nabla g\right\rangle|\nabla g|^{p-2}.  $$
We note that
$$ \begin{aligned}
|\nabla(\phi g)|^p
&=\left(|\nabla\phi|^2g^2+2\phi g\left\langle\nabla\phi, \nabla g\right\rangle+\phi^2|\nabla g|^2 \right)^\frac{p}{2}\\
&\leq \phi^p|\nabla g|^p+p\phi g\left\langle\nabla\phi, \nabla g\right\rangle\phi^{p-2}|\nabla g|^{p-2}+c|\nabla\phi|^2g^{p}, 
\end{aligned} $$
for some constant $c$ depending on $p$. Therefore, we have
 \begin{align}
\int_M&\phi^pg(\Delta_p(g)+\lambda_{1,p}g^{p-1})\notag\\
&=\lambda_{1,p}\int_M(\phi g)^{p}-\int_M\phi^p|\nabla g|^p-p\int_M\phi^{p-1}g\left\langle \nabla\phi, \nabla h\right\rangle|\nabla h|^{p-2}\notag\\
&\leq \int_M|\nabla(\phi g)|^p-\int_M\phi^p|\nabla g|^p- p\int_M\phi^{p-1}g\left\langle \nabla\phi, \nabla h\right\rangle|\nabla h|^{p-2}\notag\\
&\leq c\int_M|\nabla\phi|^2g^{p}.\label{es31}
\end{align}
Now, for $R>0$, we choose the test function $0\leq\phi\leq 1$ such that 
$$ \phi=\begin{cases}
1,\quad&\text{ on }B(R)\\
0&\text{ on }M\setminus B(2R)
\end{cases} $$ 
and $|\nabla\phi|\leq\frac{2}{R}$. It turns out that
\begin{align}
\int_M|\nabla\phi|^2g^p
&=\int_M|\nabla\phi|^2e^{a\beta}\leq\frac{4}{R^2}\int_{B(2R)\setminus B(R)}e^{a\beta}\notag\\
&\leq\frac{4}{R^2}\int_{E\cap(B(2R)\setminus B(R))}e^{a\beta}+\frac{4}{R^2}\int_{(M\setminus E)\cap(B(2R)\setminus B(R))}e^{a\beta}.\label{es32}
\end{align}
Since $\lambda_{1,p}=b^p=\left(\frac{a}{p}\right)^p$, the theorem 0.1 in \cite{BK} implies
$V(E\setminus B(R))\leq ce^{-aR}.$
Therefore, the first term of \eqref{es32} tends to $0$ when $R\to\infty$. Note that Li and Wang (see \cite{LW3}) showed that 
$$ \beta(q)\leq-r(q)+c $$
on $M\setminus E$. Moreover, by volume comparison theorem in \cite{KLZ, LW3}, we have $V(B(R))\leq ce^{aR}$. This implies that the second term of \eqref{es32} converges to $0$ as $R$ goes to infinity. Hence, by \eqref{es31} we have 
$$ \Delta_pg+\lambda_{1,p}g^{p-1}\equiv0. $$
Thus,
$$ \Delta\beta=-a. $$
The conclusion is followed by using the argument in \cite{LW3} (in theorem \ref{main3}) and \cite{KLZ} (in theorem \ref{main4}). The proof is complete.
\end{proof}
%\begin{remark}
%Since the proof of our theorems is close to the argument of Sung and Wang in \cite{SW}, I would like to mention their names as main co-authors of this work. 
%\end{remark}
%%%%%%%%%%%%%%%%%%%%%%%%%%%%%%%%%%%%%%%%%%%%%%%%%%%%%%%%%%%%%%%%%%%%%%%%%%%%%%%%%%%%%%%%%%%%%%%%%%%%%%%%%%%%%%%%%%%%%%%%%%%%%%%%%%%%%%%
%%%%%%%%%%%%%%%%%%%%%%%%%%%%%%%%%%%%%%%%%%%%%%%%%%%%%%%%%%%%%%%%%%%%%%%%%%%%%%%%%%%%%%%%%%%%%%%%%%%%%%%%%%%%%%%%%%%%%%%%%%%%%%%%%%%%

\section*{Acknowledgment:} The author was supported in part by NAFOSTED under grant number 101.02-2014.49. A part of this note was written during a his stay at Vietnam Institute for Advance Study in Mathematics (VIASM). He would to express his sincerely thanks to staffs there for excellent working condition and financial support.

\addcontentsline{toc}{section}{3 \hspace{0.09cm} References}

\vskip0.4cm

\bigskip
\noindent
Nguyen Thac Dung \\
Department of Mathematics, Mechanics, and Informatics (MIM)\\
Hanoi University of Sciences (HUS-VNU)\\
Vietnam National University\\
334 Nguyen Trai Str., Thanh Xuan, Hanoi\\
{\tt E-mail:dungmath@yahoo.co.uk}

\vskip0.4cm

\end{document}